\documentclass[12pt]{article}
\usepackage{amsmath}
\usepackage{amssymb}
\usepackage{mathrsfs}
\usepackage{amssymb}
\usepackage{latexsym}
\usepackage{amsthm}
\usepackage{mathrsfs}
\usepackage{enumitem}
\usepackage[colorlinks,
            linkcolor=red,
            anchorcolor=blue,
            citecolor=blue
            ]{hyperref}
\parskip=5pt
\setlength{\topmargin}{0.25cm} \setlength{\oddsidemargin}{0.25cm}
 \setlength{\textwidth}{16cm}
\setlength{\textheight}{22.1cm}

\newtheorem{thm}{Theorem}[section]
\newtheorem{lem}[thm]{Lemma}

\newtheorem{conj}[thm]{Conjecture}




\newcommand{\floor}[1]{\ensuremath{\left\lfloor #1 \right\rfloor}} 

\linespread{1.05}

\begin{document}

\begin{center}
{\large \bf  The Local $h$-Polynomials of Cluster Subdivisions\\
	 Have Only Real Zeros}
\end{center}

\begin{center}
Philip B. Zhang\\[6pt]

College of Mathematical Science \\
Tianjin Normal University, Tianjin  300387, P. R. China\\[8pt]

Email: {\tt zhangbiaonk@163.com}
\end{center}

\noindent\textbf{Abstract.} 
Athanasiadis raised the question whether the local $h$-polynomials of type $A$ cluster subdivisions have only real zeros. In this paper, we confirm this conjecture and prove the real-rootedness of local $h$-polynomials for all the other Cartan--Killing types. Our proofs mainly involve multiplier sequences and Chebyshev polynomials of the second kind.

\noindent \emph{AMS Classification 2010:}  05A15, 05E45,  26C10, 52B45

\noindent \emph{Keywords:}  real-rootedness; multiplier sequence; local $h$-polynomial; cluster subdivision; Chebyshev polynomial of the second kind.

\section{Introduction}
In this paper, we confirm a question of Athanasiadis that the  local $h$-polynomials of type $A$ cluster subdivisions have only real zeros. We also show the real-rootedness of local $h$-polynomials for all the other Cartan--Killing types.
%

We first give an overview of local $h$-polynomials. 
The notion of local $h$-polynomials was introduced by Stanley \cite{Stanley1992Subdivisions} in his  study of the face enumeration of subdivisions of complexes.
Let $V$ be an $n$-element vertex set.  Given a  simplicial  subdivision $\Gamma$ of the abstract
simplex $2^V$, the local $h$-polynomial $\ell_V (\Gamma, x)$ is defined as an alternating
sum of the $h$-polynomials of the restrictions of $\Gamma$ to the
faces of $2^V$, namely,		
\begin{align*}
 \ell_{V} \left(\Gamma, x \right) = \sum_{F \subseteq  V} (-1)^{n-|F|} \, h\left(\Gamma_{F},x\right),
\end{align*}
where $h\left(\Gamma_{F},x\right)$ is the $h$-polynomial of $\Gamma_{F}$.
Stanley \cite{Stanley1992Subdivisions} also showed that $\ell_{V} \left(\Gamma, x \right)$ has nonnegative and symmetric coefficients, hence
the local $h$-polynomial  can be expressed as 
  \begin{equation} \label{eq:defclocalg}
    \ell_{V} (\Gamma, x) \ = \ \sum_{i=0}^{\lfloor n/2 \rfloor} \, \xi_i
    \, x^i (1+x)^{n-2i}.
  \end{equation}
Athanasiadis \cite{Athanasiadis2017survey} made an excellent survey on this topic and raised an open question whether local $h$-polynomials for several families of subdivisions have only real zeros. 

This paper is concerned with cluster subdivisions.
Let $I$ be an $n$-element set and $\Phi = \{a_i:i\in I\}$ be a root system. 
The cluster complex $\Delta(\Phi)$, studied by Fomin and Zelevinsky \cite{Fomin2002Cluster, Fomin2003$Y$}, is a simplicial complex on the vertex set of positive roots and negative simple roots.
The positive cluster complex $\Delta_+(\Phi)$ is the restriction of $\Delta(\Phi)$ on the positive roots.
It naturally defines a geometric subdivision of the simplex on the vertex set of simple roots of $\Phi$,
the so-called \emph{cluster subdivision $\Gamma(\Phi)$}.
The local $h$-polynomial $\ell_I(\Gamma(\Phi),x)$ is given by 
$$\ell_I(\Gamma(\Phi),x) = \sum_{J\subseteq I} (-1)^{|I\backslash J|}h(\Delta_+(\Phi_{J}),x),$$
where $\Phi_{J}$ is the parabolic root subsystem of $\Phi$ with respect to $J$.

Although closed form expressions for the local $h$-polynomials of type A and type B were not found until now, the following result of Athanasiadis and Savvidou \cite{Athanasiadis2011/12local} gave explicit expressions of the numbers $\xi_i$ defined by \eqref{eq:defclocalg}.

\begin{lem} [{\cite[Theorem 1.2]{Athanasiadis2011/12local}}]\label{thm:clocalg}
Let $\Phi$ be an irreducible root system of rank $n$ and Cartan--Killing type $\mathcal X$
and let $\xi_i (\Phi)$ be the integers uniquely defined by \eqref{eq:defclocalg}.
Then $\xi_0 (\Phi) = 0$ and
  $$ \xi_i (\Phi) \ = \ \begin{cases}
    \displaystyle \frac{1}{n-i+1} \binom{n}{i} \binom{n-i-1}{i-1},
    & \text{if \ $\mathcal X = A_n$} \\ & \\
    \displaystyle \binom{n}{i} \binom{n-i-1}{i-1}, & \text{if \ $\mathcal X = B_n$} \\
    & \\ \displaystyle \frac{n-2}{i} \binom{2i-2}{i-1} \binom{n-2}{2i-2},
    & \text{if \ $\mathcal X = D_n$}  \end{cases} $$

\bigskip
\noindent
for $1 \le i \le \lfloor n/2 \rfloor$. Moreover,

 $$ \sum_{i=0}^{\lfloor n/2 \rfloor} \, \xi_i (\Phi) x^i \ = \ \begin{cases}
    (m-2)x, & \text{if \ $\mathcal X = I_2(m)$} \\
    8x, & \text{if \ $\mathcal X = H_3$} \\
    42x + 40x^2, & \text{if \ $\mathcal X = H_4$} \\
    10x + 9x^2, & \text{if \ $\mathcal X = F_4$} \\
    7x + 35x^2+ 13x^3, & \text{if \ $\mathcal X = E_6$} \\
    16x + 124x^2 + 112x^3, & \text{if \ $\mathcal X = E_7$} \\
    44x + 484x^2 + 784x^3 + 120x^4, & \text{if \ $\mathcal X = E_8$}.  \end{cases} $$
\end{lem}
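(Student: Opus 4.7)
The plan is to attack the formula directly from the defining identity
$$\ell_I(\Gamma(\Phi),x) \ = \ \sum_{J\subset I} (-1)^{|I\setminus J|}h(\Delta_{+}(\Phi_{J}),x),$$
comparing it against the expansion (\ref{eq:defclocalg}). For every Cartan--Killing type $\mathcal X$, a parabolic subsystem $\Phi_J$ decomposes uniquely as a product of irreducible components dictated by the way $J$ cuts the Coxeter--Dynkin diagram; since the positive cluster complex of a product is the join of the factors, $h(\Delta_+(\Phi_J),x)$ factors as the product of the $h$-polynomials of the components. Thus the first task is to substitute the known generating polynomials for these factors: the Narayana polynomial for $A_m$, the type-$B$ Narayana polynomial for $B_m$, and the type-$D$ Narayana polynomial for $D_m$ (all available from Fomin--Reading's work on $h$-polynomials of positive cluster complexes).

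For type $A_n$ I would re-parametrize the sum over subsets $J\subset I$ by compositions of $n-|J|+1$ (each recording the lengths of the surviving sub-paths of the $A_n$ diagram). The alternating sum then turns into a product of one-variable generating functions, which can be identified explicitly using standard manipulations with Catalan/Narayana generating series. Once a closed form for $\ell_I(\Gamma(A_n),x)$ is in hand, I would read off $\xi_i(A_n)$ by using the fact that the basis $\{x^i(1+x)^{n-2i}\}_{i=0}^{\lfloor n/2\rfloor}$ is a basis for the space of symmetric polynomials of degree $\le n$ with palindromy center $n/2$: the passage from the coefficients in the monomial basis to the $\xi_i$ is effected by a triangular binomial transform, so a Vandermonde-type identity should produce the quoted $\tfrac{1}{n-i+1}\binom{n}{i}\binom{n-i-1}{i-1}$.

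Types $B_n$ and $D_n$ follow the same template, with one structural difference: removing nodes from the $B_n$ (resp.~$D_n$) diagram yields at most one non-type-$A$ factor sitting at the special end (resp.~fork) of the diagram. This allows the alternating sum to split as a type-$A$ tail (already understood from the previous paragraph) convolved with a short correction coming from the special factor. The same binomial identification then delivers the $B$ and $D$ formulas. For the seven sporadic types $I_2(m),H_3,H_4,F_4,E_6,E_7,E_8$, the rank is bounded so the defining sum has only finitely many terms; I would compute $h(\Delta_+(\Phi_J),x)$ directly for each parabolic $J$ using the known $f$-vectors of positive cluster complexes and perform the alternating sum explicitly (a finite, if tedious, calculation, easily delegated to a computer algebra system).

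The main obstacle I anticipate is the combinatorial identity in the $A_n$ case: showing that the alternating sum of products of Narayana polynomials, grouped by compositions, collapses to the quoted hypergeometric-looking coefficients in the $\{x^i(1+x)^{n-2i}\}$ basis. This should be doable either via a clever manipulation of the generating function $\sum_n \ell_{[n]}(\Gamma(A_n),x)t^n$, or by exhibiting a sign-reversing involution on the underlying ``doubly positive'' faces of $\Delta_+(\Phi)$, the latter being the approach I would try first since it would simultaneously explain the nonnegativity and the symmetry of the $\xi_i(\Phi)$.
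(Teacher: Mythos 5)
The paper offers no proof of this lemma: it is imported verbatim from Athanasiadis and Savvidou \cite{Athanasiadis2011/12local}, so there is no in-paper argument to compare yours against. Judged on its own, your outline follows the natural (and essentially the cited source's) route: decompose each parabolic $\Phi_J$ into irreducible factors cut out of the Coxeter--Dynkin diagram, use the fact that $h(\Delta_+(\Phi_J),x)$ is multiplicative over these factors, insert the known Narayana-type $h$-polynomials of positive cluster complexes, and re-expand the alternating sum in the basis $\{x^i(1+x)^{n-2i}\}_{0\le i\le \lfloor n/2\rfloor}$ (which is legitimate, since these polynomials are linearly independent and $\ell_I(\Gamma(\Phi),x)$ is symmetric of the right degree by Stanley's general theory). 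The treatment of the exceptional types by finite computation is also exactly what one does.

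The genuine gap is that the one step carrying all the content is precisely the step you defer. Identifying the alternating sum over compositions of products of type-$A$ Narayana polynomials, and extracting from it the coefficients $\frac{1}{n-i+1}\binom{n}{i}\binom{n-i-1}{i-1}$ in the $x^i(1+x)^{n-2i}$ basis, is not a routine Vandermonde manipulation that can be waved at; it is the theorem. You explicitly label it ``the main obstacle I anticipate'' and propose two possible attacks (a generating-function computation or a sign-reversing involution) without executing either, and the $B_n$ and $D_n$ cases are then made to rest on this unproved $A$-case kernel plus an unspecified ``short correction.'' As it stands the proposal is a credible research plan, not a proof: to close it you would need to either carry out the generating-function collapse in type $A$ explicitly (including the change of basis to the $\xi_i$), or simply do what the present paper does and cite \cite{Athanasiadis2011/12local}, where this computation is performed.
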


Athanasiadis \cite{Athanasiadis2017survey} made the following conjecture.

\begin{conj}
 The local $h$-polynomial of type $A$ cluster subdivision of the simplex  has only real zeros.
\end{conj}

In this paper, we confirm this conjecture  and furthermore prove the real-rootedness for all the other types.

\begin{thm}\label{thm:main}
 For any irreducible root system, the local $h$-polynomial of the cluster subdivision of the simplex  has only real zeros.
\end{thm}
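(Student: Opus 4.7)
My plan is to split the argument into the six exceptional cases, handled by direct discriminant computation, and the three infinite families $A_n$, $B_n$, $D_n$, treated uniformly using Chebyshev polynomials of the second kind and a Hadamard-product multiplier-sequence trick. The key reduction is to substitute $y = x/(1+x)^2$ in (\ref{eq:defclocalg}), giving $\ell_I(\Gamma(\Phi),x) = (1+x)^n\,g_\Phi(y)$ with $g_\Phi(y) := \sum_i \xi_i(\Phi)\,y^i$. Since $\xi_i \geq 0$, any real zero $y_k$ of $g_\Phi$ is nonpositive, and for such $y_k$ the quadratic $x - y_k(1+x)^2$ has discriminant $1-4y_k \geq 1 > 0$, hence real roots. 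Thus real-rootedness of $g_\Phi$ forces that of $\ell_I$, and the problem reduces to showing each $g_\Phi(y)$ has only real zeros. For the exceptional types, Lemma~\ref{thm:clocalg} writes $g_\Phi(y)/y$ as a polynomial of degree at most $3$, and real-rootedness is verified by a discriminant or a sign-change count.

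For types $A$, $B$, $D$, I would manipulate the three formulas for $\xi_i(\Phi)$ into a single shape. Using $\frac{1}{i}\binom{2i-2}{i-1} = C_{i-1}$ together with $C_k\binom{n-2}{2k} = \frac{1}{k+1}\binom{n-2}{k}\binom{n-k-2}{k}$ and $\frac{1}{M}\binom{M}{k+1} = \frac{1}{k+1}\binom{M-1}{k}$, all three families take the common form
\[
 g_{X_n}(y) \;=\; c_X\, y\, Q_{N_X,\,n-2}(y),
 \qquad
 Q_{N,m}(y) \;:=\; \sum_{k\geq 0}\binom{N}{k+1}\binom{m-k}{k}\,y^k,
\]
with $(N_A, N_B, N_D) = (n+1, n, n-1)$ and positive constants $c_X$. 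So it suffices to show $Q_{N,m}(y)$ is real-rooted. Two ingredients do this. First, the auxiliary polynomial $F_m(y) := \sum_k \binom{m-k}{k}\,y^k$ is real-rooted: an explicit evaluation gives $F_m(y) = U_m(\cos\theta)/(2\cos\theta)^m$ under $y = -\tfrac14\sec^2\theta$, so its zeros are $-\tfrac14\sec^2(j\pi/(m+1))$ for $j = 1, \ldots, \lfloor m/2 \rfloor$, all real (here $U_m$ is the Chebyshev polynomial of the second kind). Second, if $p(y) = \sum a_k y^k$ is real-rooted with $a_k \geq 0$, then $y p(y)$ is real-rooted with nonnegative coefficients, its Hadamard product with $(1+y)^N$ is real-rooted with nonnegative coefficients by the classical Schur theorem on Hadamard products, and dividing by the obvious factor of $y$ yields $\sum_k \binom{N}{k+1} a_k y^k$, still real-rooted. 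Applying this operation to $p = F_{n-2}$ proves that $Q_{N,n-2}(y)$ is real-rooted, completing the theorem.

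The main obstacle is the unification step: recognizing that the three apparently different expressions for $\xi_i(\Phi)$ in types $A$, $B$, $D$ collapse onto the single family $Q_{N,m}$ after suitable binomial manipulations, and that $Q_{N,m}$ itself decomposes as a multiplier sequence applied to the Chebyshev-like polynomial $F_m$. Once this structural observation is in place, the Chebyshev evaluation of $F_m$ and the Schur Hadamard-product result are standard tools that immediately finish the argument.
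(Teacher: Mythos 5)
Your proposal is correct, and for the reduction step and for types $A$ and $B$ it coincides with the paper's argument: your substitution $y=x/(1+x)^2$ is Lemma~\ref{lem:transformation} (your factorization of $\ell$ into quadratics $x-y_k(1+x)^2$ of discriminant $1-4y_k\ge 1$ is in fact a cleaner justification than the paper's root count), your $F_m$ is the paper's $H_m$ with the same Chebyshev evaluation, and your ``Hadamard product with $(1+y)^N$'' is exactly the paper's multiplier sequence $\{\tfrac{1}{k!(N-k)!}\}=\tfrac{1}{N!}\{\binom{N}{k}\}$ from Lemma~\ref{lem:ms} (the result you want is usually attributed to Malo rather than Schur --- Schur's version inserts a factor $k!$ --- but since $(1+y)^N$ has all zeros at $-1$ the hypothesis ``zeros of one factor all of the same sign'' holds and the statement you use is the correct one, equivalent to Pólya--Schur (ii)). Where you genuinely depart from the paper is type $D$: the paper invokes the identity $\sum_i \tfrac{1}{i+1}\binom{2i}{i}\binom{n}{2i}x^i(1+x)^{n-2i}=\tfrac{1}{n+1}\sum_i\binom{n+1}{i}\binom{n+1}{i+1}x^i$ to reduce to the Narayana polynomial and then cites its known real-rootedness, whereas you use $\tfrac{1}{i}\binom{2i-2}{i-1}\binom{n-2}{2i-2}=\tfrac{1}{n-1}\binom{n-1}{i}\binom{n-i-2}{i-1}$ (which I have checked) to fold type $D$ into the same family $Q_{N,n-2}$ with $N=n-1$, so that all three infinite families are dispatched by one Chebyshev-plus-multiplier-sequence argument. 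This buys a self-contained and uniform proof that avoids the external references for Narayana polynomials, at the cost of a few extra binomial manipulations; the paper's route for type $D$ is shorter if one is willing to quote the literature.
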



The remainder of this paper is organized as follows.
In Section \ref{sect:pre}, we give an overview of the theory of multiplier sequences.
In Section \ref{sect:proof}, we present our proof of Theorem \ref{thm:main}.

\section{Preliminaries}\label{sect:pre}
In this section, we present some background on real-rooted polynomials. Several basic facts about multiplier sequences are given.
Recall that a  sequence of real numbers $\{\lambda_{k}\}_{k=0}^{\infty}$ is a
multiplier sequence, if for every polynomial $\sum_{k=0}^{n}a_{k}z^{k}$
with all zeros real, the polynomial $\sum_{k=0}^{n}\lambda_{k}a_{k}z^{k}$
is either identically zero or has only real zeros.
In the following, we shall list some related facts which will be used in the paper. 
For a complete introduction of multiplier sequences, we refer the reader to \cite{Craven1996Problems, Craven2004Composition, Rahman2002Analytic}.

A fundamental theorem of multiplier sequences is due to P{\'o}lya and Schur \cite{Polya1914Uber}. Before introducing their result, we recall the notion of Laguerre--P{\'o}lya class.
An entire function $\phi(x)=\sum_{i=0}^{\infty}\gamma_k \frac{x^k}{k!}$ is in the \emph{Laguerre--P{\'o}lya class}, written 
$\phi \in \mathscr{L}\mbox{-}\mathscr{P}$
if it can be written as 
\begin{align*}
\phi(x) = c x^m e^{-ax^2+bx}\prod_{k=1}^{\omega}(1+\frac{x}{x_k})e^{-\frac{x}{x_k}}, \quad (0\le \omega \le \infty), 
\end{align*}
where $b,c,x_k\in \mathbb{R}$, $m$ is a non-negative integer, $a\ge 0$, $x_k\neq 0$ and $\sum_{k=1}^{\omega} \frac{1}{x_k^2}<\infty$.
Let $\mathscr{L}\mbox{-}\mathscr{P}^+$ denote the set of functions in the Laguerre--P{\'o}lya class with nonnegative coefficients, and  $\mathscr{L}\mbox{-}\mathscr{P}(-\infty,0]$ denote the set of functions in the Laguerre--P{\'o}lya class that have only non-positive zeros. 
A remarkable property is that an entire function is in the Laguerre--P{\'o}lya class if and only if it is a locally uniform limit of real polynomials which have only real zeros. 

A complete characterization of multiplier sequences was given by P{\'o}lya
and Schur \cite{Polya1914Uber}. 
\begin{thm}[P{\'o}lya--Schur]\label{thm:PS}
Let $\{\lambda_k\}_{k=0}^{\infty}$ be a sequence of real numbers. The following statements are equivalent:
\begin{enumerate}
 \item[(i)] $\{\lambda_k\}_{k=0}^{\infty}$ is a multiplier sequence;
 \item[(ii)] For any non-negative integer $n$, either the polynomial $\sum_{k=0}^{n}{\binom{n}{k}}\lambda_k x^k$ has only real zeros of the same sign or it is identically zero.
 \item[(iii)] Either $\sum_{k=0}^{\infty} \lambda_k \frac{x^k}{k!}$ or $\sum_{k=0}^{\infty} (-1)^k \lambda_k \frac{x^k}{k!}$ belongs to $\mathscr{L}\mbox{-}\mathscr{P}^+$.
\end{enumerate}
\end{thm}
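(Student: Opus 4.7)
The plan is to establish the three-way equivalence cyclically: (i) $\Rightarrow$ (ii) $\Rightarrow$ (iii) $\Rightarrow$ (i). The \emph{Jensen polynomials} $J_n(z) := \sum_{k=0}^{n} \binom{n}{k}\gamma_k z^k$ serve as the bridge between the finite data in (ii) and the entire function $\phi(z) = \sum_{k} \gamma_k z^k/k!$ in (iii), and the characterization of $\mathscr{L}\mbox{-}\mathscr{P}$ as the class of locally uniform limits of real-rooted polynomials (recalled just before Theorem \ref{thm:PS}) is used throughout.

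For (i) $\Rightarrow$ (ii), apply the multiplier sequence to the real-rooted polynomial $(1+z)^n = \sum_{k}\binom{n}{k} z^k$; the image $J_n(z)$ is identically zero or has only real zeros by the definition of a multiplier sequence. The ``same sign'' refinement needs an additional sign-pattern argument: if the nonzero entries of $\{\gamma_k\}$ fail to be monotone or strictly alternating in sign, then three indices witness a bad configuration, from which one constructs an explicit real-rooted polynomial supported on those coordinates whose image under $T_\gamma$ reduces to a quadratic with negative discriminant, contradicting (i).

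For (ii) $\Rightarrow$ (iii), rescale to $J_n(z/n) = \sum_{k=0}^{n}\prod_{j=0}^{k-1}(1-j/n)\cdot(\gamma_k/k!)\,z^k$. The product tends to $1$ coefficient-wise, so $J_n(z/n)$ converges coefficient-wise to $\phi(z)$ (or to $\phi(-z)$, depending on whether the uniform sign from (ii) is negative or positive). Since each $J_n(z/n)$ has only real zeros of a uniform sign, their one-sided clustering controls the family and upgrades pointwise convergence to locally uniform convergence on $\mathbb{C}$. The recalled characterization of $\mathscr{L}\mbox{-}\mathscr{P}$ then places $\phi$ (or $\phi^*(z):=\phi(-z)$) in $\mathscr{L}\mbox{-}\mathscr{P}^+$, yielding (iii).

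For (iii) $\Rightarrow$ (i), I invoke the classical Schur--Szeg\H o composition theorem: if $f(z) = \sum_{k=0}^{n}\binom{n}{k}a_k z^k$ and $g(z) = \sum_{k=0}^{n}\binom{n}{k}b_k z^k$ both have only real zeros, then so does $\sum_{k=0}^{n}\binom{n}{k} a_k b_k z^k$. Given a real-rooted $P(z)=\sum_{k=0}^{n} a_k z^k$, composing $P$ with the Jensen polynomial $J_n(z)$ produces, after a harmless rescaling, the polynomial $T_\gamma P(z)=\sum_{k}\gamma_k a_k z^k$, which is therefore real-rooted provided $J_n$ is. That $J_n$ is real-rooted for every $n$ follows from (iii) by approximating $\phi\in\mathscr{L}\mbox{-}\mathscr{P}^+$ by polynomials with only real non-positive zeros (again using the recalled characterization), transferring this approximation to the Jensen polynomials, and passing to the limit via Hurwitz's theorem. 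The main obstacle in the whole program is the Schur--Szeg\H o composition theorem itself, which requires an independent classical proof (typically via Laguerre's theorem on polar derivatives, or via Grace's apolarity theorem); a secondary technical point is the same-sign analysis needed in (i) $\Rightarrow$ (ii).
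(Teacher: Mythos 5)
The paper does not prove this theorem: it is quoted as the classical P\'olya--Schur characterization, with the proof delegated to the 1914 paper and to the surveys of Craven--Csordas and Rahman--Schmeisser. So there is no internal proof to compare against; what you have written is a condensed version of the standard classical argument (Jensen polynomials as the bridge, the rescaling $J_n(z/n)\to\phi(z)$, and the composition theorem for the return direction), and the overall architecture is the right one.

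Two points need repair before this would count as a proof. First, you have misstated the Schur--Szeg\H{o} composition theorem: it is \emph{false} that real-rootedness of both $f(z)=\sum_k\binom{n}{k}a_kz^k$ and $g(z)=\sum_k\binom{n}{k}b_kz^k$ forces real-rootedness of $\sum_k\binom{n}{k}a_kb_kz^k$. Take $f=g=z^2-1$ (so $a_0=b_0=-1$, $a_1=b_1=0$, $a_2=b_2=1$); the composite is $z^2+1$. The correct hypothesis is that one of the two polynomials has all its zeros of the same sign (equivalently, zeros in a half-line through the origin, so that Grace's theorem applies to a circular region containing them). Fortunately this is exactly what (iii) delivers for $J_n$ --- a real-rooted polynomial with nonnegative coefficients has only nonpositive zeros --- so the application survives, but the lemma must be stated with that hypothesis. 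Second, two steps are asserted rather than argued: (a) in (ii)$\Rightarrow$(iii), the upgrade from coefficientwise to locally uniform convergence of $J_n(z/n)$ is the technical heart of that implication; the standard route is to write $J_n(z)=\gamma_0\prod_i(1+z/\alpha_i)$ with $\alpha_i>0$ and bound $|J_n(z/n)|\le\gamma_0\exp\bigl(|z|\gamma_1/\gamma_0\bigr)$ using $\sum_i\alpha_i^{-1}=n\gamma_1/\gamma_0$, which gives local boundedness and hence normality (and, not incidentally, proves that $\phi$ is entire, which is not free); (b) in (iii)$\Rightarrow$(i), the base case of your Hurwitz argument --- that the Jensen polynomials of a real-rooted \emph{polynomial} with zeros of one sign are real-rooted --- is itself the statement that $\{n!/(n-k)!\}_k$ is a finite multiplier sequence, so it needs an independent proof (Laguerre/Hermite--Poulain, or the composition theorem again) to avoid circularity. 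With those repairs the outline is sound, and it is consistent with how the cited sources prove the theorem.
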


For convenience, we let $\frac{1}{k!}$ be zero whenever $k$ is a negative integer.
By Theorem \ref{thm:PS}, we obtain the following result. 
\begin{lem}\label{lem:1}
 For any postive integer $n$, the sequence $\{ \frac{1}{(n-k)!}\}_{k=0}^{\infty}$ is a multiplier sequence.
\end{lem}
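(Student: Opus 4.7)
The plan is to verify the sequence is a multiplier sequence by invoking condition (iii) of the P\'olya--Schur theorem (Theorem \ref{thm:PS}). Write $\gamma_k = 1/(n-k)!$, with the natural convention $\gamma_k = 0$ for $k > n$ (obtained, for instance, from the reciprocal-Gamma extension $1/\Gamma(n-k+1)$, which vanishes at non-positive integer arguments). In particular the sequence $\{\gamma_k\}$ has only finitely many nonzero entries, so there are no convergence issues.

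First I would form the associated exponential generating function
\[
\phi(x) \;=\; \sum_{k=0}^{\infty} \gamma_k \frac{x^k}{k!} \;=\; \sum_{k=0}^{n} \frac{x^k}{k!\,(n-k)!} \;=\; \frac{1}{n!}\sum_{k=0}^{n}\binom{n}{k}x^k \;=\; \frac{(1+x)^n}{n!}.
\]
Since $\phi(x)$ is a polynomial with nonnegative coefficients whose only zero is $x=-1$ (with multiplicity $n$), it lies in $\mathscr{L}\mbox{-}\mathscr{P}^{+}$: it is trivially the locally uniform limit of real polynomials with only real zeros (namely itself), and the definition of the Laguerre--P\'olya class is satisfied with $c = 1/n!$, $m=0$, $a=b=0$, and $\omega = n$ with all $x_k = 1$.

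Having placed $\phi$ in $\mathscr{L}\mbox{-}\mathscr{P}^{+}$, condition (iii) of Theorem \ref{thm:PS} immediately yields that $\{\gamma_k\}_{k=0}^{\infty} = \{1/(n-k)!\}_{k=0}^{\infty}$ is a multiplier sequence, which is what we want. There is no genuine obstacle here: the only point requiring care is fixing the convention $\gamma_k = 0$ for $k>n$, after which the identity $\phi(x) = (1+x)^n/n!$ is a routine binomial computation and membership in $\mathscr{L}\mbox{-}\mathscr{P}^{+}$ is immediate.
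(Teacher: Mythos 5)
Your proposal is correct and follows essentially the same route as the paper: both compute the exponential generating function $\sum_{k\ge 0}\frac{1}{(n-k)!}\frac{x^k}{k!}=\frac{(1+x)^n}{n!}$ and invoke condition (iii) of the P\'olya--Schur theorem. The only difference is that you spell out the convention $\frac{1}{(n-k)!}=0$ for $k>n$ and the membership in $\mathscr{L}\mbox{-}\mathscr{P}^{+}$, details the paper leaves implicit.
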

\begin{proof}
 Clearly, the function $$\sum_{k=0}^{\infty}\frac{1}{(n-k)!}\frac{x^k}{k!} = \frac{1}{n!}(1+x)^n$$ has only real zeros.
 This completes the proof by Theorem \ref{thm:PS}.
\end{proof}

The following result of Laguerre can produce several multiplier sequences. 
\begin{thm}[{\cite{Craven2004Composition}}]\label{thm:Laguerre}
 If $\phi(x) \in \mathscr{L}\mbox{-}\mathscr{P} (-\infty,0]$, then $\{\phi(k)\}_{k=0}^{\infty}$ is a multiplier sequence.
\end{thm}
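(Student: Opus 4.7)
The plan is to apply the P\'olya--Schur criterion (Theorem \ref{thm:PS}(iii)): since any $\phi \in \mathscr{L}\mbox{-}\mathscr{P}(-\infty,0]$ takes nonnegative values on $\{0,1,2,\ldots\}$ (after normalizing its leading term so that $\phi \ge 0$ on $[0,\infty)$, as is forced by the Hadamard form of such $\phi$), the sequence $\{\phi(k)\}_{k \ge 0}$ is a multiplier sequence if and only if the entire function
$$F_{\phi}(x) \;:=\; \sum_{k=0}^{\infty} \phi(k)\,\frac{x^{k}}{k!}$$
lies in $\mathscr{L}\mbox{-}\mathscr{P}^{+}$. I would establish this in three stages.

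First, I would settle the single linear factor case $\phi(x) = x + a$ with $a \ge 0$ by direct calculation: $F_{x+a}(x) = \sum_{k \ge 0}(k+a)x^{k}/k! = (x+a)e^{x}$. Since $e^{x} = \lim_{n \to \infty}(1 + x/n)^{n}$ locally uniformly, $(x+a)e^{x}$ is the locally uniform limit of the polynomials $(x+a)(1+x/n)^{n}$, whose zeros all lie in $(-\infty, 0]$, so $(x+a)e^{x} \in \mathscr{L}\mbox{-}\mathscr{P}$. Its Taylor coefficients $(k+a)/k!$ are nonnegative, so in fact $(x+a)e^{x} \in \mathscr{L}\mbox{-}\mathscr{P}^{+}$, and Theorem \ref{thm:PS} gives that $\{k + a\}_{k \ge 0}$ is a multiplier sequence. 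Next, multiplier sequences are closed under coordinatewise multiplication --- given multiplier sequences $\{\lambda_{k}\}$ and $\{\mu_{k}\}$, applying them in succession to a real-rooted polynomial preserves real-rootedness at each step, so $\{\lambda_{k}\mu_{k}\}$ is again a multiplier sequence --- which, together with the trivial constant multiplier sequence $\{c\}$ for $c > 0$, handles every polynomial $\phi(x) = c \prod_{i=1}^{d}(x + a_{i})$ with $a_{i} \ge 0$.

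For a general $\phi \in \mathscr{L}\mbox{-}\mathscr{P}(-\infty,0]$, I would invoke its Hadamard factorisation $\phi(x) = c\,x^{m}\,e^{-A x^{2} + b x} \prod_{k \ge 1}(1 + x/x_{k})\,e^{-x/x_{k}}$ with $A \ge 0$, $b \in \mathbb{R}$, and $x_{k} > 0$, and truncate it to obtain polynomial approximants $\phi_{N}$ with only non-positive real zeros converging to $\phi$ locally uniformly (hence pointwise at every non-negative integer). The polynomial case applies to each $\phi_{N}$, so for any real-rooted $f(x) = \sum_{k} a_{k} x^{k}$ the polynomial $\sum_{k} \phi_{N}(k) a_{k} x^{k}$ has only real zeros. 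As $N \to \infty$, Hurwitz's theorem --- locally uniform limits of real-rooted polynomials remain real-rooted or are identically zero --- transfers this real-rootedness to $\sum_{k} \phi(k) a_{k} x^{k}$, completing the argument.

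The main obstacle is producing the polynomial approximants $\phi_{N} \in \mathscr{L}\mbox{-}\mathscr{P}(-\infty,0]$ in the presence of a Gaussian factor $e^{-A x^{2}}$ with $A > 0$: the naive approximation $(1 - A x^{2}/n)^{n}$ introduces spurious positive zeros and so violates the sign constraint on the zeros. This delicate point can be overcome either by absorbing the Gaussian factor into an expanding Weierstrass-type product of linear factors with only non-positive zeros (mimicking $e^{-A x^{2}} = \lim \prod(1 + x/y_{j})\exp(-x/y_{j})$ for suitably chosen positive $y_{j}$ coming from the real zeros of Hermite-type approximants shifted to $(-\infty, 0]$), or by separately verifying that $\{e^{-A k^{2}}\}$ is itself a multiplier sequence via a Gaussian convolution identity applied to the exponential generating function.
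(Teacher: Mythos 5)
The paper does not actually prove this statement: it is quoted from Craven--Csordas \cite{Craven2004Composition} and used as a black box, so there is no internal proof to compare against. Judged on its own, your argument is correct and standard for the genus-zero part of $\phi$: the base case $\{k+a\}_{k\ge 0}$ with $a\ge 0$ via $\sum_{k\ge 0}(k+a)x^k/k!=(x+a)e^x\in\mathscr{L}\mbox{-}\mathscr{P}^+$, closure of multiplier sequences under Hadamard products (to which you should add the geometric sequence $\{e^{bk}\}_{k\ge0}$, acting by $f(x)\mapsto f(e^bx)$, to absorb the exponential factor $e^{bx}$ with $b$ possibly negative), and a Hurwitz limit over the partial products of the Hadamard factorization. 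This settles the theorem for every $\phi\in\mathscr{L}\mbox{-}\mathscr{P}(-\infty,0]$ whose factorization carries no Gaussian factor.

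The Gaussian factor $e^{-Ax^2}$ with $A>0$ is, however, a genuine gap rather than a removable technicality, and neither of your proposed repairs works. The first is impossible: a polynomial with only non-positive zeros and positive value at $0$ is a product of factors that are nonnegative and nondecreasing on $[0,\infty)$, hence is itself nondecreasing there, so no sequence of such polynomials can converge locally uniformly to the strictly decreasing, zero-free function $e^{-Ax^2}$; equivalently, $e^{-Ax^2}$ cannot be written as a limit of canonical products over negative zeros. The second repair ("a Gaussian convolution identity") is an unproved assertion; that $\{e^{-Ak^2}\}_{k\ge 0}$ is a multiplier sequence is true but not elementary, and establishing it is exactly the missing content. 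The classical route, and the one behind the statement in \cite{Craven2004Composition}, is Laguerre's polynomial theorem in its sharper form: if $f=\sum_{k=0}^n a_kx^k$ is real and $h$ is a real-rooted polynomial with \emph{no zeros in the open interval $(0,n)$} --- the zeros need not be non-positive --- then $\sum_k h(k)a_kx^k$ has no more non-real zeros than $f$. With this, for each fixed $n$ one approximates $e^{-Ax^2}$ by $(1-Ax^2/N)^N$, whose zeros $\pm\sqrt{N/A}$ lie outside $(0,n)$ once $N>An^2$, and Hurwitz's theorem finishes the argument. Unless you replace your ``only non-positive zeros'' hypothesis on the approximants by ``no zeros in $(0,n)$,'' the proof does not close.
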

The following identity of the gamma function, due to Weierstrass,
\begin{align*}
 {\Gamma(x)} = \frac{1}{x} \exp(-\gamma x) \prod_{n=1}^{\infty}{(1+\frac{x}{n})}^{-1} \exp(\frac{x}{n}),
\end{align*} 
where $\gamma \approx 0.577216\cdots$ is the Euler--Mascheroni constant, shows that 
$\frac{1}{\Gamma(x)}$
belongs to $\mathscr{L}\mbox{-}\mathscr{P} (-\infty,0]$. Hence, it follows from Theorem \ref{thm:Laguerre} that 
\begin{lem}\label{lem:2}
The sequence $\{\frac{1}{k!}\}_{k=0}^{\infty}
=\{\frac{1}{\Gamma(k+1)}\}_{k=0}^{\infty}$ is a multiplier sequence. 
\end{lem}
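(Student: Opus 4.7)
The plan is to apply Theorem~\ref{thm:Laguerre} directly to an appropriately chosen entire function. I want a $\phi \in \mathscr{L}\mbox{-}\mathscr{P}(-\infty,0]$ with $\phi(k) = 1/k!$ for all $k \geq 0$, and the natural candidate is $\phi(x) = 1/\Gamma(x+1)$, since $\Gamma(k+1) = k!$. Most of the setup is already done in the paper: the Weierstrass product
\[
\frac{1}{\Gamma(x)} \;=\; x\,e^{\gamma x}\prod_{n=1}^{\infty}\left(1+\frac{x}{n}\right)e^{-x/n}
\]
is displayed just before the lemma.

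First I would use the functional equation $\Gamma(x+1) = x\,\Gamma(x)$ to cancel the $x$ factor in the Weierstrass product, obtaining
\[
\frac{1}{\Gamma(x+1)} \;=\; \frac{1}{x\,\Gamma(x)} \;=\; e^{\gamma x}\prod_{n=1}^{\infty}\left(1+\frac{x}{n}\right)e^{-x/n}.
\]
This realizes $1/\Gamma(x+1)$ in the Laguerre--P\'olya normal form given in the paper (with $c=1$, $m=0$, $a=0$, $b=\gamma$, and zeros $x_n=-n$ satisfying $\sum 1/n^2 < \infty$). All its zeros $-1,-2,-3,\ldots$ are non-positive, so $\phi \in \mathscr{L}\mbox{-}\mathscr{P}(-\infty,0]$. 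Theorem~\ref{thm:Laguerre} then immediately gives that $\{\phi(k)\}_{k=0}^{\infty} = \{1/k!\}_{k=0}^{\infty}$ is a multiplier sequence.

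The only subtlety worth flagging is the index shift from $1/\Gamma(x)$ to $1/\Gamma(x+1)$: applying Laguerre's theorem directly to $1/\Gamma(x)$ would produce the sequence $(0,1,1,1/2,1/6,\ldots)$, since $1/\Gamma$ has a zero at the origin. Shifting the argument by one moves the zero set from $\{0,-1,-2,\ldots\}$ to $\{-1,-2,\ldots\}$, which is still contained in $(-\infty,0]$, so the shifted function still belongs to $\mathscr{L}\mbox{-}\mathscr{P}(-\infty,0]$. Beyond this bookkeeping, the lemma is essentially a one-line corollary of Theorem~\ref{thm:Laguerre}, and there is no real obstacle.
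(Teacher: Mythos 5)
Your proposal is correct and follows essentially the same route as the paper: both realize the reciprocal Gamma function via its Weierstrass product as a member of $\mathscr{L}\mbox{-}\mathscr{P}(-\infty,0]$ and then invoke Theorem~\ref{thm:Laguerre}. Your explicit shift to $\phi(x)=1/\Gamma(x+1)$ is in fact slightly more careful than the paper, which applies Laguerre's theorem to $1/\Gamma(x)$ directly even though that would literally yield the sequence $\{1/\Gamma(k)\}_{k\ge 0}$ beginning with a $0$; your bookkeeping fixes this harmless imprecision.
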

We now give two multiplier sequences, which will be used in the next section.
\begin{lem}\label{lem:ms}
 For any positive integer $n$, 
 the sequence $\{\frac{1}{i! (n-i)!}\}_{i\ge 0}$ is a multiplier sequence.  
\end{lem}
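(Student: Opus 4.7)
The plan is very short: combine the two multiplier sequences already produced in Lemma~\ref{lem:1} and Lemma~\ref{lem:2} via closure of multiplier sequences under termwise (Hadamard) products.

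The key observation I would invoke is that the set of multiplier sequences is closed under termwise multiplication. This is immediate from the definition: given real sequences $\{\lambda_k\}$ and $\{\mu_k\}$ that are each multiplier sequences, and any real-rooted polynomial $\sum_{k=0}^{n} a_k z^k$, first applying $\{\lambda_k\}$ produces a real-rooted (or identically zero) polynomial $\sum_{k=0}^{n} \lambda_k a_k z^k$, and then applying $\{\mu_k\}$ to the result produces the real-rooted polynomial $\sum_{k=0}^{n} \mu_k \lambda_k a_k z^k$. Hence $\{\lambda_k \mu_k\}_{k\ge 0}$ is itself a multiplier sequence. I would state this as a one-line lemma (or cite the standard reference) before proceeding.

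With that tool in hand, the argument is a single line. By Lemma~\ref{lem:1}, the sequence $\{1/(n-k)!\}_{k\ge 0}$ (with the convention $1/(n-k)! = 0$ for $k>n$, as dictated by the polynomial expansion of $(1+x)^n/n!$) is a multiplier sequence. By Lemma~\ref{lem:2}, the sequence $\{1/k!\}_{k\ge 0}$ is also a multiplier sequence. Taking the termwise product yields exactly $\{1/(k!(n-k)!)\}_{k\ge 0}$, which is therefore a multiplier sequence.

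There is essentially no obstacle here; the only step worth spelling out carefully is the closure under termwise products, since it is not formally stated earlier in the preliminaries. Alternatively, one could bypass it by verifying condition (iii) of Theorem~\ref{thm:PS} directly: the generating function $\sum_{k=0}^{\infty} \frac{1}{k!(n-k)!}\cdot \frac{x^k}{k!}$ differs from $(1+x)^n/n!$ only by an application of the Laguerre-type multiplier $\{1/k!\}$ of Lemma~\ref{lem:2}, which keeps it in $\mathscr{L}\mbox{-}\mathscr{P}^+$. Either route takes only a line or two.
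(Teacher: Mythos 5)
Your proposal is correct and follows essentially the same route as the paper: the paper's proof simply invokes closure of multiplier sequences under termwise (Hadamard) products and combines Lemma~\ref{lem:1} with Lemma~\ref{lem:2}. Your extra step of justifying the closure by composing the two multipliers, and your remark on the convention $1/(n-k)!=0$ for $k>n$, only make explicit what the paper leaves implicit.
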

\begin{proof}
By the definition, the Hadamard product (termwise product) of two multiplier sequences is also a multiplier sequence.
Hence, by Lemma \ref{lem:1} and Lemma \ref{lem:2}, it follows that $\{\frac{1}{i! (n-i)!}\}_{i\ge 0}$ is a multiplier sequence.  This completes the proof.
\end{proof}

%

Before ending this section, we address the following elementary but useful fact .

\begin{lem}[{\cite[ Observation 4.2]{Petersen2015Eulerian}}]\label{lem:transformation}
    
  If a polynomial $\ell (x)$ has symmetric coefficients, then 
  \begin{align*}
  \ell (x) = \sum_{i=1}^{\lfloor n/2 \rfloor} \, \xi_i  \, x^i (1+x)^{n-2i}
  \end{align*} has only negative real zeros if and only if so does 
the polynomial 
  \begin{align*}
  \xi (x) = \sum_{i=1}^{\lfloor n/2 \rfloor} \, \xi_i
     \, x^i.
  \end{align*}
\end{lem}
%
%
%

\section{Real-rootedness of local  \texorpdfstring{$h$}{Lg}-polynomials of cluster subdivisions}\label{sect:proof}
In this section, we shall give our proof of Theorem \ref{thm:main} case by case.
Combining Lemma \ref{thm:clocalg} and Lemma \ref{lem:transformation}, one can easily check the local $h$-polynomials for the exceptional groups have only real zeros.
In the following, we shall discuss the case of type $A$, type $B$ and type $D$, respectively.

\subsection{Type  \texorpdfstring{$A$}{Lg}}
In this subsection we deal with the real-rootedness of 
\begin{align*}
 \ell_I (\Gamma (A_n), x) = \sum_{i=1}^{\floor{n/2}}\frac{1}{n-i+1}\binom{n}{i}\binom{n-i-1}{i-1}x^i(1+x)^{n-2i}.
\end{align*} 
With the aid of Lemma \ref{lem:transformation}, we turn our attention to the following polynomial
\begin{align}\label{eq:A1}
 \xi_I (\Gamma (A_n), x) =
\sum_{i=1}^{\floor{n/2}}\frac{n!}{i!(n-i+1)!}\binom{n-i-1}{i-1}x^i.
\end{align}
The main result of this subsection is as follows.
\begin{thm}\label{thm:A}
 For any positive integer $n$, the polynomial $\xi_I (\Gamma (A_n), x)$ has only real zeros. 
\end{thm}

We first consider the real-rootedness of the following polynomial
\begin{align*}
 \sum_{i=1}^{\floor{n/2}}\binom{n-i-1}{i-1}x^i.
\end{align*} 
Since 
\begin{align*}
 \sum_{i=1}^{\floor{n/2}}\binom{n-i-1}{i-1}x^i =  \sum_{i=1}^{\floor{n/2}}\binom{n-2-(i-1)}{i-1}x^i  = x  \sum_{j=0}^{\floor{n/2}-1}\binom{n-2-j}{j}x^j,
\end{align*}
we focus on the following polynomial 
\begin{align*}
 H_n(x)=\sum_{j=0}^{\floor{n/2}}\binom{n-j}{j}x^j.
\end{align*}
\begin{lem}\label{lem:H}
 For any postive integer $n$, the polynomial $H_n(x)$ has only negative and simple zeros. 
\end{lem}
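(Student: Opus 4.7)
The plan is to relate $H_n(x)$ to the Chebyshev polynomial of the second kind and then read off the zeros in closed form. Recall the classical expansion
\[
U_n(y) \;=\; \sum_{j=0}^{\floor{n/2}} (-1)^j \binom{n-j}{j} (2y)^{n-2j}.
\]
Substituting this directly and comparing coefficients, one verifies the identity
\[
y^n \, H_n\!\left(-\tfrac{1}{y^2}\right) \;=\; U_n\!\left(\tfrac{y}{2}\right) \qquad (y \neq 0).
\]
I would establish this identity as the first step, since it converts the problem of locating the zeros of $H_n$ into the well-understood problem of locating the zeros of $U_n$.

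Next, I would invoke the classical fact that $U_n$ has exactly $n$ simple real zeros $\cos(k\pi/(n+1))$ for $k = 1, \dots, n$. Thus $U_n(y/2)$ vanishes precisely at $y_k := 2\cos(k\pi/(n+1))$, and these roots are symmetric about the origin; the value $y = 0$ occurs as a root precisely when $n$ is odd (at $k = (n+1)/2$). Consequently, the \emph{nonzero} roots of $U_n(y/2)$ organize into exactly $\floor{n/2}$ unordered pairs $\{y_k, -y_k\}$, and each such pair corresponds under the substitution $x = -1/y^2$ to one negative number $x_k := -1/y_k^2$ at which $H_n$ vanishes.

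The last step is to observe that the map $k \mapsto x_k$ is injective on $1 \le k \le \floor{n/2}$, which follows from strict monotonicity of $\cos$ on $[0,\pi/2]$. Since $\deg H_n = \floor{n/2}$ and we have produced $\floor{n/2}$ distinct negative numbers where $H_n$ vanishes, these are all the zeros of $H_n$ and they are simple.

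The main obstacle is purely bookkeeping: the substitution $x = -1/y^2$ is two-to-one, and one must handle the parity of $n$ with care, since the middle zero $y = 0$ of $U_n$ (when $n$ is odd) does not contribute a zero of $H_n$. There are no analytic subtleties, because the Chebyshev factorization supplies every root explicitly.
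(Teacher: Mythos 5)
Your proposal is correct and follows essentially the same route as the paper: both identify $H_n$ with a rescaled Chebyshev polynomial of the second kind (your identity $y^nH_n(-1/y^2)=U_n(y/2)$ is the paper's substitution $y\mapsto 1/(2y)$ in disguise) and read off the zeros as $-\tfrac{1}{4}\sec^2\bigl(\tfrac{k\pi}{n+1}\bigr)$ for $k=1,\dots,\lfloor n/2\rfloor$. Your bookkeeping of the two-to-one substitution and the parity of $n$ is in fact slightly more careful than the paper's.
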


\begin{proof}
The polynomial $H_n(x)$  is closely related to the Chebyshev polynomial of the second kind,
\begin{align*}
 U_{n}(y)=\sum_{k=0}^{\floor{n/2}}(-1)^k\binom{n-k}{k}(2y)^{n-2k}.
\end{align*}
Replacing $y$ by $1/2y$, we have 
\begin{align}\label{Chebyshev}
 y^n U_{n}(\frac{1}{2y}) = \sum_{k=0}^{\floor{n/2}}\binom{n-k}{k}(-y^2)^{k}.
\end{align}
From its trigonometric definition,  the Chebyshev polynomial of the second kind satisfies
\begin{align*}
 U_n(\cos \theta)=\frac{\sin (n+1)\theta}{\sin \theta}.
\end{align*}
Hence, we get all the zeros of $U_n(y)$, which are $\cos (\frac{k}{n+1}\pi)$, where $k=1,2,\ldots,n$.
Together with \eqref{Chebyshev}, it follows that $-\frac{1}{4}\sec^2(\frac{k}{n+1}\pi)$, where  $k =1, 2, \ldots,  \floor{n/2}$, are the zeros of $H_{n}(x)$.
Therefore, the zeros of $H_n(x)$ are real and simple.
This completes the proof.
\end{proof}

Now we are able to prove Theorem \ref{thm:A}.

\begin{proof}[Proof of Theorem \ref{thm:A}]
By Lemma \ref{lem:H}, the polynomial 
$$\sum_{i=1}^{\floor{n/2}}\binom{n-i-1}{i-1}x^i = x H_{n-2}(x)$$
has only real zeros.
By Lemma \ref{lem:ms}, the sequence $\{\frac{1}{i!(n-i+1)!}\}_{i\ge 0}$ is a multiplier sequence. 
Hence, 
\begin{align*}
 \xi_I (\Gamma (A_n), x) = \sum_{i=1}^{\floor{n/2}}\frac{n!}{i!(n-i+1)!}\binom{n-i-1}{i-1}x^i.
\end{align*} 
has only real zeros. 
This completes the proof.


\end{proof}

\subsection{Type  \texorpdfstring{$B$}{Lg}}
We now consider the real-rootedness of the following polynomials 
\begin{align*}
 \ell_I (\Gamma (B_n), x) = \sum_{i=1}^{\floor{n/2}}\binom{n}{i}\binom{n-i-1}{i-1}x^i(1+x)^{n-2i}
\end{align*} for any postive integer $n$.

Along similar lines of the above section, one can show that 
\begin{thm}\label{thm:B}
For any positive integer $n$, the polynomial $\ell_I (\Gamma (B_n), x) $ has only real zeros.
\end{thm}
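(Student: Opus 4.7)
The plan is to imitate the type $A$ argument essentially verbatim, since the only difference between $P_n(x)$ and the type $B$ analogue
\[
Q_n(x) = \sum_{i=1}^{\floor{n/2}}\binom{n}{i}\binom{n-i-1}{i-1}x^i
\]
is the missing factor $\tfrac{1}{n-i+1}$, which makes the multiplier sequence step even more transparent. First I would apply Lemma~\ref{lem:transformation} to reduce real-rootedness of $\ell_I(\Gamma(B_n),x)$ to real-rootedness of $Q_n(x)$.

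Next I would rewrite
\[
Q_n(x) = n! \sum_{i=1}^{\floor{n/2}} \frac{1}{i!\,(n-i)!}\binom{n-i-1}{i-1}x^i,
\]
so that the coefficients of $Q_n(x)$ are obtained, up to the harmless global factor $n!$, from those of
\[
x H_{n-2}(x) = \sum_{i=1}^{\floor{n/2}}\binom{n-i-1}{i-1}x^i
\]
by termwise multiplication with the sequence $\bigl\{\tfrac{1}{i!\,(n-i)!}\bigr\}_{i\ge 0}$.

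By Lemma~\ref{lem:H}, the polynomial $x H_{n-2}(x)$ has only real zeros. By Lemma~\ref{lem:ms}, the sequence $\bigl\{\tfrac{1}{i!\,(n-i)!}\bigr\}_{i\ge 0}$ is a multiplier sequence for the given $n$. Applying this multiplier sequence to $x H_{n-2}(x)$ therefore produces a real-rooted polynomial, and multiplying by $n!$ preserves the zero set. Hence $Q_n(x)$ has only real zeros, and Lemma~\ref{lem:transformation} completes the proof.

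There is no serious obstacle here; the type $B$ case is actually slightly cleaner than type $A$ because the multiplier sequence of Lemma~\ref{lem:ms} applies directly without an index shift. The only point worth double-checking is that the sum ranges agree and that $Q_n(x)$ indeed has degree $\floor{n/2}$ (which follows from $\binom{n-i-1}{i-1}$ being nonzero in that range), so that no spurious zero is introduced or lost when the multiplier sequence is applied.
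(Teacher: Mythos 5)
Your proposal is correct and follows exactly the paper's own argument: reduce to $Q_n(x)$ via Lemma~\ref{lem:transformation}, observe that $xH_{n-2}(x)$ is real-rooted by Lemma~\ref{lem:H}, and apply the multiplier sequence $\{\frac{1}{i!(n-i)!}\}_{i\ge 0}$ of Lemma~\ref{lem:ms}. Nothing further is needed.
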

\begin{proof}
By Lemma \ref{lem:H}, the polynomial 
$$\sum_{i=1}^{\floor{n/2}}\binom{n-i-1}{i-1}x^i = x H_{n-2}(x)$$
has only real zeros.
By Lemma \ref{lem:ms}, the sequence $\{\frac{1}{i!(n-i)!}\}_{i\ge 0}$ is a multiplier sequence. 
For any positive integer $n$, the polynomial 
\begin{align*}
 \xi_I (\Gamma (B_n), x) =
 \sum_{i=1}^{\floor{n/2}}\binom{n}{i}\binom{n-i-1}{i-1}x^i =
 \ n! \sum_{i=1}^{\floor{n/2}}\frac{1}{i!(n-i)!}\binom{n-i-1}{i-1}x^i
\end{align*}
has only real zeros. 
By Lemma \ref{lem:transformation} we obtain that $\ell_I (\Gamma (B_n), x)$ has only real zeros. 
\end{proof}

\subsection{Type  \texorpdfstring{$D$}{Lg}}
We now consider the real-rootedness of the following polynomials
\begin{align*}
 \ell_I (\Gamma (D_n), x) = \sum_{i=1}^{\floor{n/2}}\frac{n-2}{i}\binom{2i-2}{i-1}\binom{n-2}{2i-2}x^i(1+x)^{n-2i}
\end{align*}
for any postive integer $n\ge2$.

From the following identity \cite{Braenden2011Iterated} of \emph{Narayana polynomials}:
\begin{align*}
 \sum_{i=0}^{\floor{n/2}}\frac{1}{i+1}\binom{2i}{i}\binom{n}{2i}x^i(1+x)^{n-2i}=\sum_{i=0}^{n}\frac{1}{n+1}\binom{n+1}{i}\binom{n+1}{i+1}x^i,
\end{align*}
we get an expression of  $\ell_I (\Gamma (D_n), x)$,
\begin{align*}
 \ell_I (\Gamma (D_n), x) & = \sum_{i=1}^{\floor{n/2}}\frac{n-2}{i}\binom{2i-2}{i-1}\binom{n-2}{2i-2}x^i(1+x)^{n-2i}\\[6pt]
 &= (n-2) x \sum_{i=0}^{\floor{(n-2)/2}}\frac{1}{i+1}\binom{2i}{i}\binom{n-2}{2i}x^i(1+x)^{n-2-2i}\\[6pt]
&= (n-2)x \sum_{i=0}^{n-2}\frac{1}{n-1}\binom{n-1}{i}\binom{n-1}{i+1}x^i.
\end{align*}
The Narayana polynomials have been known to be real-rooted, see \cite{Braenden2011Iterated, Liu2007unified}.
Hence, we are able to derive the following result.
\begin{thm}\label{thm:D}
For any positive integer $n\ge2$, the polynomial $\ell_I (\Gamma (D_n), x) $ has only real zeros.
\end{thm}

\vskip 3mm
\noindent{\bf Acknowledgements.} 
This work was supported by the National Science Foundation of China (Nos. 11626172, 11701424), the TJNU Funding for Scholars Studying Abroad,  the PHD Program of TJNU (No. XB1616) and MECF of Tianjin (No. JW1713).


\end{document}